\newcommand{\nc}{\newcommand}
\nc{\pdp}{\dot{\phi}_{+}}
\nc{\pdm}{\dot{\phi}_{-}}
\nc{\pd}{\dot{\phi}}
\nc{\ydp}{\dot{y}_{+}}
\nc{\ydm}{\dot{y}_{-}}
\nc{\yodm}{\dot{y_1}_{-}}
\nc{\yodp}{\dot{y_1}_{+}}
\nc{\ytdm}{\dot{y_2}_{-}}
\nc{\ytdp}{\dot{y_2}_{+}}
\nc{\yp}{y_+}
\nc{\ym}{y_-}
\nc{\yd}{\dot{y}}
\nc{\fp}{\phi_{+}}
\nc{\fm}{\phi_{-}}
\nc{\de}{\delta}
\nc{\e}{\epsilon}
\nc{\ds}{\displaystyle}
\nc{\bt}{\beta_2}
\nc{\bo}{\beta_1}
\nc{\mo}{m_1}
\nc{\mt}{m_2}
\nc{\xo}{x_1}
\nc{\xt}{x_2}
\nc{\yo}{y_1}
\nc{\yt}{y_2}
\nc{\yod}{\dot{y}_1}
\nc{\ytd}{\dot{y}_2}
\nc{\dpl}{d_{+}}
\nc{\dmi}{d_{-}}
\nc{\vv}{\mathbf{v}}
\nc{\vn}{\mathbf{n}}
\nc{\Yd}{\dot{Y}}
\nc{\g}{\gamma}
\nc{\ti}{\theta_i}
\nc{\tip}{\theta_i'}
\theoremstyle{plain}
\newtheorem{theorem}{Theorem}[section]
\newtheorem{lemma}[theorem]{Lemma}
\newtheorem{rmk}[theorem]{Remark}
\theoremstyle{definition}
\newtheorem{corollary}[theorem]{Corollary}
\begin{document}

\title{
 Billiard dynamics of  bouncing dumbbell }

\author{Y. Baryshnikov, V. Blumen, K. Kim, V. Zharnitsky }

\address{Department of Mathematics, University of Illinois, Urbana, IL 61801}

\begin{abstract}
A system of two masses connected with a weightless rod (called dumbbell in this paper) interacting with a flat boundary is considered. The sharp bound on the number of collisions with the boundary is found using billiard techniques. In case, the ratio of masses is large and the dumbbell rotates fast, an adiabatic invariant is obtained.
\end{abstract}

\maketitle
\section{Introduction}
 
Coin flipping had been already known to ancient Romans as a way to decide an outcome \cite{telegraph}. More recently,  scientists inspired  by this old question, how unbiased the real (physical) coin is, have been studying coin dynamics, see e.g.  \cite{keller, mahadevan, diaconis}. 

Previous  studies have mainly focused on the dynamics of the flying coin assuming that  it does not bounce and finding the effects of angular momentum 
on the final orientation. Partial analysis in combination  with numerical simulations of the bouncing effects has been done by Vulovic and Prange 
\cite{vulovic}. It appears that this is the only reference that addressed the effect  of bouncing on coin tossing.

On the other hand, there is a well developed theory of mathematical billiards: classical dynamics of a particle moving inside a bounded domain.
The particle moves along straight line until it hits the boundary. Next, the particle reflects from the boundary according to the Fermat's law.

 The billiard  problem originally appeared in the context of Boltzman ergodic hypothesis \cite{sinai} to verify physical assumptions about 
ergodicity  of a gas of elastic spheres. 
However, various techniques in billiard dynamics turned out to be useful beyond the original physical problem. The so-called unfolding technique 
(which is used in this paper) allows one to obtain estimates on the maximal number of bounces of a particle in a wedge. 
One could expect that the bouncing coin dynamics could be interpreted as a billiard ball problem.  

In this paper we consider a simpler system (with fewer degrees of freedom) which we call the  dumbbell. The bouncing coin on a flat surface, restricted to have axis of rotation pointing in the same direction, can be modeled as a system of two masses connected with a weightless 
rod. 

The dumbbell dynamics that is studied in this article is a useful model to initiate investigation of  this potentially useful relation.

Another motivation for the dumbbell dynamics comes from robotics exploratory problems, see {\em e.g.}  \cite{lavalle}. 
Consider an automated system that moves in a bounded domain and interacts with the boundary according to some simple laws. 
In many applications, it is important to cover  the whole region as {\em e.g.} in automated vacuum cleaners such as Roomba. 
Then, a natural question arises:  {\em what simple mechanical system can generate a dense coverage of a certain subset of the given 
configuration space}. 
The dumbbell, compared to a material point, has an extra degree of freedom which can generate more chaotic behavior as {\em e.g.} in Sinai billiards. 
Indeed, a rapidly rotating dumbbell will quickly ``forget'' its initial orientation before the next encounter with the boundary  
raising some hope for stronger ergodicity. 

In this paper, we study the interaction of a dumbbell  with the flat boundary. This is an important first step before understanding 
the full dynamics of the dumbbell in some simple domains.  By appropriately  rescaling the variables, we obtain an associated single particle 
billiard problem with the boundary corresponding to the collision curve (which is piecewise smooth) in the configuration space.  
The number of collisions of the dumbbell with the boundary before scattering out depends on the mass ratio $m_1/m_2$. If this ratio 
is far from 1, then the notion of adiabatic invariance can be introduced as there is sufficient time scales separation. 
We prove an adiabatic invariant type theorem and we describe under what conditions it can be used. 

Finally, we estimate the maximal number of bounces of the dumbbell with the flat boundary. \\

\noindent
{\bf Notation:}
We use some standard notation when dealing with asymptotic expansions in order to avoid cumbersome use of implicit constants. \\
$  f \lesssim g  \Leftrightarrow    f= O(g)  \Leftrightarrow f \leq Cg $ for some $C>0$\\
$f \gtrsim g  \Leftrightarrow g = O(f)$ \\
$f \sim g   \Leftrightarrow    f \lesssim g \,\, {\rm and} \,\, f \gtrsim g    $ \\

\section{Collision Laws}

\subsection{Dumbbell-like System}

Let us consider a dumbbell-like system, which consists of two point masses $\mo$, $\mt$, connected by weightless rigid rod of length $1$ in the two-dimensional space with coordinates $(x,y)$. The coordinates of $\mo$, $\mt$, and the center of mass of the system are given by $(\xo, \yo)$, $(\xt, \yt)$, and $(x, y)$, respectively. Let $\phi$ be the angle measured in the counterclockwise direction from the base line through $\mo$ and to the rod. We also define the mass ratios $\ds \bo = \frac{\mo}{\mo+\mt}$ and $\ds \bt=\frac{\mt}{\mo+\mt}$ which correspond to the distance from the center of mass to $\mt$ and to $\mo$, respectively. 
\begin{figure}[h]
\centering
 \includegraphics[width=3 in]{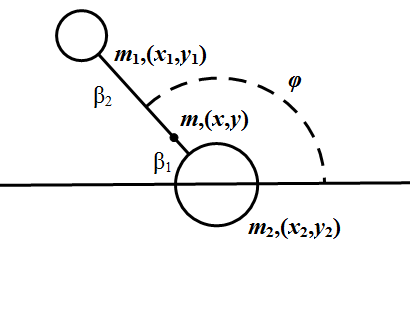}
 \caption{The system of dumbbell.}
\end{figure}

The dumbbell moves freely in the space until it hits the floor. In this system, the velocity of the center of mass in $x$ direction is constant since there is no force acting on the system in $x$ direction. Thus, we may assume without loss of generality that the center of mass does not move in $x$ direction. With this reduction, the dumbbell configuration space is two dimensional with the natural choice of coordinates $(y,\phi)$.

The moment of inertia of the dumbbell is given by 
\[I=\mo \bt^2 + \mt \bo^2 = \bo\bt (\mo + \mt). \]
Introducing the total mass $m = \mo + \mt$, we can write the kinetic energy of the system as 
\begin{align}\label{eq:egy}
K= \frac{1}{2}m\yd^2 + \frac{1}{2}\bo\bt m \pd^2.
\end{align}

Using the  relations, 
\[
\yo = y + \bt \sin \phi
\]
\[
\yt = y - \bo \sin \phi,
\]
we find the velocities of each mass
\[
\yod = \yd + \bt  \pd \cos \phi
\]
\[
\ytd = \yd - \bo \pd \cos \phi.
\]


\subsection{Derivation of Collision Laws}


By rescaling  $y=\sqrt{\frac{I}{m}}Y$, we rewrite the kinetic energy 

\[
K=\frac{m}{2}\yd^2 + \frac{I}{2}\pd^2=\frac{I}{2}\left( \Yd^2 + \pd^2 \right).
\]

By Hamilton's principle of least action, true orbits extremize
\[
\int_{t_0, Y_0, \phi_0}^{t_1, Y_1, \phi_1} K( \Yd, \pd)  \mathrm{d}t.
\]

Since the kinetic energy is equal to the that  of the free particle, 
the trajectories are straight lines between two collisions. When the dumbbell hits the boundary, the collision law is the same as in the classical billiard  since in $(Y, \phi)$ coordinates the action is the same. Using the relations
\begin{equation*}\begin{split}\\
y_1 &=y + \bt\sin \phi \geq 0\\
y_2 &=y - \bo\sin \phi \geq 0,\\
\end{split}
\end{equation*}

we find the boundaries for the dumbbell dynamics in the  $Y$-$\phi$ plane:
\begin{equation}\tag{2a}\label{bd1}
Y= -\sqrt{\frac{m}{I}}\bt\sin\phi =-\sqrt{\frac{\bt}{\bo}}\sin \phi 
\end{equation}

\begin{equation}\tag{2b}\label{bd2}
Y= \;\;\, \sqrt{\frac{m}{I}}\bo\sin\phi=\;\;\,\sqrt{\frac{\bo}{\bt}} \sin \phi. 
\end{equation}

The dumbbell hits the floor if one of the above inequalities becomes an equality. Therefore, we take the maximum of two equations to get the boundaries: 
\begin{align}\tag{2c}\label{bd}
Y={\rm max} \left\{-\sqrt{\bt/\bo}\sin\phi, \sqrt{\bo/\bt}\sin\phi \right\} \text{ for } \phi \in [0, 2\pi]. 
\end{align}


Note that this boundary has non-smooth corner at $\phi = 0, \pi$. This is the case when the dumbbell's two masses hit the floor at the same time. We will not consider this degenerate case in our paper. 

Now we will derive the collision law for the case when only $\mo$ hits the boundary.  We recall that given vector $\vv_-$ and a unit vector $\vn$ the reflection of  $\vv_-$ across  $\vn$ is given by 

\begin{equation}\label{eq:ref}\tag{3}
\vv_{+} = -2 \frac{\vv_-\cdot \vn }{\vn \cdot \vn} \vn + \vv	_-.
\end{equation}

Here and in the remainder of the paper, $x_-, y_-, ...$ are defined as the 
corresponding values right 
before the collision and $x_+, y_+, ...$ are defined as the corresponding values
right before the next collision.

According to the collision law, the angle of reflection is equal to the angle of incidence.  In our case, $\vn$ is the normal vector to the boundary 
\[
Y =\ds -\sqrt{\frac{m}{I}}\bt \sin \phi
\]
 so that 
\begin{equation*}\begin{split}\\
\vn &=\left[1,\sqrt{m/I} \, \bt \cos \phi\right]\\
 \vv_-&= \left[\Yd_-, \pdm\right]=\left[\sqrt{m/I} \, \ydm, \pdm\right].\end{split}
\end{equation*}

Then, using (\ref{eq:ref}), we compute $\ds \vv_{+} = \left[\Yd_+, \pdp\right]$. In this way, we express the translational and the angular velocities after the collision in terms of the velocities before $\mo$ hits the floor. Changing back to the original coordinates, we have

\begin{align}\label{eq:law}\tag{4}
\left( \begin{array}{r}
\ydp \\
\\
\pdp\\
 \end{array} \right)
&=
 \left( \begin{array}{c}
\ds \sqrt{\frac{I}{m}} \Yd_+ \\
\\
\pdp\\
 \end{array} \right) 
=\notag
\left( \begin{array}{l}
\ds \ydm \left ( -1 + \frac{2 \bt\cos^2 \phi}{\bo + \bt \cos^2\phi} \right ) - \pdm \left( \frac{2 \bo\bt\cos \phi}{\bo + \bt \cos^2\phi}  \right)\\
\\
\ds \pdm \left( 1 - \frac{2 \bt\cos^2 \phi}{\bo + \bt \cos^2\phi} \right) - \ydm \left ( \frac{2\cos\phi}{\bo + \bt \cos^2 \phi} \right)
\end{array}\right). 
\end{align}

\vspace{2mm}

\begin{rmk}
The bouncing law for the other case, when $\mt$ hits the boundary can be obtained in a similar manner: we switch $\bo$ and $\bt$, replace $\cos \phi$ and $\sin \phi$ with $-\cos \phi$ and $-\sin \phi$, and replace $\yo$ with $\yt$. 
\end{rmk}

\section{Adiabatic Invariant}

Consider the case when $\mo \ll \mt$ and $\mo$ rotates around $\mt$ with high angular velocity $\pd$ and assume that the center of mass  has slow downward velocity compared to $\pd$.  Since multiplying velocities $(\dot \phi, \dot y)$ by a constant does not change the orbit, we  normalize $\dot \phi$ to be of order 1, then $\dot y$ is small. Consider such dumbbell
slowly approaching the floor, rotating with angular velocity of order 1, {\em i.e.} 
$\dot \phi \sim 1$. 

At some moment the small mass  $\mo$ will hit the floor. If the angle $\phi = \pi/2$ 
(or sufficiently close to it), then the dumbbell will bounce away without experiencing any more collisions. This situation is rather exceptional.

 A simple calculation shows that $|\phi-3\pi/2|$ will be generically  of order $\sqrt{|\dot y|}$ for our  limit $\dot y \rightarrow 0$. In this section we assume this favorable scenario. 
For the corresponding set of initial conditions, we obtain an adiabatic invariant (nearly conserved quantity).  We start by deriving approximate map between two consecutive bounces.

\begin{figure}[h]\label{fig2}
\centering
 \includegraphics[width=4 in]{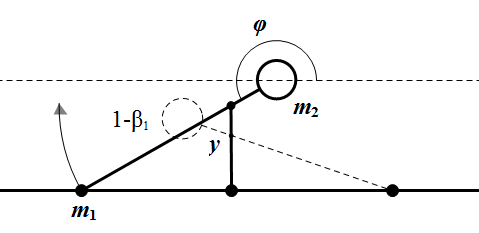}
 \caption{The light mass bounces many times off  the floor while the large 
mass slowly approaches the floor.}
\end{figure}


\begin{lemma}
\label{lemma_bounce}
Let  $\bo = \e \ll 1$, $\pdm \neq  0$  and assume $\mo$ bounces off the floor and hits the floor next before $\mt$ does.  
Then there exist sufficiently small $ \de \gg \e$ such that if  $ -\de < \ydm <0$ and  $|\phi - \frac{3\pi}{2} | \gtrsim \sqrt \de$,  
the  collision map is given by

\begin{align}\label{eq:phi}\tag{5}
\pdp&=-\pdm - \frac{2}{\sqrt{1-\ym^2}}\ydm + O\left(\frac{\e}{\de}\right) 
\end{align}
\begin{align}\label{eq:dist}\tag{6}
\yp&=\ym - \frac{2 \pi  - 2 \arccos \ds  \ym }{\pdm} \ydm + O\left(\de^{3/2}\right) + O\left(\frac{\e}{\sqrt{\de}}\right).
\end{align}
\end{lemma}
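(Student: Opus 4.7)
The plan has two parts: expand the instantaneous collision map (4) in the regime $\epsilon \ll \delta$, $|\cos\phi_-|\gtrsim\sqrt\delta$, and then propagate the motion forward through free flight until $m_1$ meets the floor again. For the collision step, I would substitute $\beta_1=\epsilon$ and $\beta_2=1-\epsilon$ into (4). Since $|\cos\phi_-|\gtrsim\sqrt\delta\gg\sqrt\epsilon$, the denominator $\epsilon+(1-\epsilon)\cos^2\phi_-$ is dominated by its second summand, and a geometric-series expansion in $\epsilon/\cos^2\phi_-$ produces
\[
\dot\phi_+ = -\dot\phi_- - \frac{2\dot y_-}{\cos\phi_-} + O(\epsilon/\delta),\qquad \dot y_+ = \dot y_- + O(\epsilon/\sqrt\delta).
\]
Because the impact forces $y_-=-\beta_2\sin\phi_-$, one has $|\cos\phi_-|=\sqrt{1-y_-^2/\beta_2^2}=\sqrt{1-y_-^2}+O(\epsilon/\sqrt\delta)$, and inserting this into the $\dot\phi_+$ expansion recovers (5).

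For the flight step, free motion between collisions gives $\phi(t)=\phi_-+\dot\phi_+t$ and $y(t)=y_-+\dot y_+t$, and the next bounce of $m_1$ is the smallest positive $t$ with $y(t)+\beta_2\sin\phi(t)=0$. Writing $\Delta\phi:=\dot\phi_+t$ and using $y_-=-\beta_2\sin\phi_-$ converts this to
\[
\beta_2\bigl(\sin(\phi_-+\Delta\phi)-\sin\phi_-\bigr)=-\frac{\dot y_+}{\dot\phi_+}\Delta\phi.
\]
Since the right-hand side is $O(\delta)$, at leading order one needs $\sin(\phi_-+\Delta\phi_0)=\sin\phi_-$. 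Because the collision has nearly reversed $\dot\phi$, the first $\Delta\phi_0$ in the direction of $\dot\phi_+$ is the \emph{long way around}: $m_1$ swings up, over the top, and back down to the same height, giving $|\Delta\phi_0|=2\pi-2\arccos(y_-/\beta_2)=2\pi-2\arccos y_-+O(\epsilon/\sqrt\delta)$. The implicit function theorem, applied at $\Delta\phi_0$ where the derivative of the left-hand side is $\beta_2\cos(\phi_-+\Delta\phi_0)$ of magnitude $\gtrsim\sqrt\delta$, then yields $\Delta\phi=\Delta\phi_0+\Delta\phi_1$ with $\Delta\phi_1=O\bigl(\dot y_+/(\dot\phi_+\sqrt\delta)\bigr)=O(\sqrt\delta)$.

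Finally, combining $y_+=y_-+\dot y_+\Delta\phi/\dot\phi_+$ with the expansions of $\dot y_+$, $\Delta\phi$, and $\dot\phi_+=-\dot\phi_-+O(\sqrt\delta+\epsilon/\delta)$ (read off from (5)) produces (6): the $O(\delta^{3/2})$ remainder bundles $\dot y_-\sim\delta$ against the $\sqrt\delta$-level corrections to $\Delta\phi$ and to $1/\dot\phi_+$, while the $O(\epsilon/\sqrt\delta)$ remainder is inherited directly from the collision-law expansions. I expect the main technical obstacle to be this implicit-function step, since one is inverting at a near-degenerate configuration whose derivative is only of size $\sqrt\delta$; this is precisely what fixes the error exponents and forbids relaxing the hypothesis $|\phi-3\pi/2|\gtrsim\sqrt\delta$. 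A side check that $m_2$ does not intervene during the flight reduces to verifying $y(t)-\beta_1\sin\phi(t)>0$ on $[0,\Delta t]$, which is automatic since $y$ stays near $1$ while $\beta_1\sin\phi=O(\epsilon)$.
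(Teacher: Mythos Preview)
Your proposal is correct and follows essentially the same route as the paper. For \eqref{eq:phi} both arguments expand the collision law \eqref{eq:law} in $\e/\cos^2\phi$ and then convert $\cos\phi$ to $\sqrt{1-\ym^2}$ via the impact constraint $\ym=-\bt\sin\phi$; the paper does the algebra by computing $\pdp+\pdm+2\ydm/\cos\phi$ explicitly, while you phrase it as a geometric-series expansion, but the estimates are identical. For \eqref{eq:dist} the only cosmetic difference is that the paper reads off the angular distance $\psi=2\pi-\arccos(\ym/\bt)-\arccos(\yp/\bt)$ directly from the geometry of Figure~2 and then Taylor-expands the second $\arccos$ about $\ym$, whereas you set up the next-impact condition $\bt(\sin(\phi_-+\Delta\phi)-\sin\phi_-)=-(\ydp/\pdp)\Delta\phi$ and invoke the implicit function theorem near $\Delta\phi_0$. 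These are the same computation: the paper's Taylor remainder $R_2$ is exactly your IFT correction $\Delta\phi_1=O(\sqrt\de)$, controlled by the same derivative $\bt\cos\phi\gtrsim\sqrt\de$. Your remark that the flight velocity is $\pdp$ rather than $\pdm$ is correct, and the paper tacitly absorbs the difference $\pdp+\pdm=O(\sqrt\de)$ into the $O(\de^{3/2})$ term just as you do. The side check on $\mt$ is unnecessary since the lemma assumes $\mo$ hits next, but it does no harm.
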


\begin{proof}
We prove (\ref{eq:phi}) in two steps. We first show that 
\[
\pdp = -\pdm - \frac{2}{\cos \phi} \ydm + O\left(\frac{\e}{\de}\right)
\]
using the expression for $\pdp$ in (\ref{eq:law}). We have,
\begin{align*}
\pdp  + \Big(\pdm &+ \frac{2}{\cos \phi} \ydm \Big)\\
&=\left( 1 - \frac{2 (1-\bo)\cos^2 \phi}{\bo + (1-\bo) \cos^2\phi} \right) \pdm+ \left( \frac{2\cos\phi}{\bo + (1-\bo) \cos^2 \phi}\right)\ydm + \left(\pdm + \frac{2}{\cos \phi} \ydm \right) \\
&=\frac{(\bo-(1-\bo)\cos^2 \phi)\pdm-(2\cos \phi )\ydm} {\bo \sin^2 \phi +\cos^2 \phi}+\left(\pdm+\frac{2}{\cos \phi} \ydm \right)\\
&=\ds {\bo}\left(\frac{\pdm+1+2{\ydm}(\frac{\sin^2{\phi}}{\cos{\phi}})}{{\bo}\sin^2{\phi}+\cos^2{\phi}}\right).
\end{align*}

For sufficiently  small $\de$, $\left|\phi-\frac{3\pi}{2}\right| \gtrsim \sqrt{\de}$ implies $\cos \phi \gtrsim \sqrt{\de}$. It follows that 

\begin{align*}
\left|\pdp  + \left(\pdm + \frac{2}{\cos \phi} \ydm\right)  \right| &\lesssim
\e \left|\frac{1+\frac{2\de}{\sqrt\de}}{\de}\right| = O\left(\frac{\e}{\de}\right).
\end{align*}

Observe from the Figure 2 that $\ds \phi = \frac{3\pi}{2} - \arccos \left( \frac{\ym}{1-\bo} \right)$. 
Thus, 
\begin{align*}
\pdp+\left(\pdm + \frac{2}{\cos \phi}\ydm \right)
=\pdp+\pdm + \frac{2}{\cos \left(\ds \frac{3\pi}{2} - \arccos \left(\frac{\ym}{1-\bo}\right) \right)}\ydm\\
=\pdp+\pdm + \frac{2}{\sqrt{1-\left(\ds \frac{\ym}{1-\bo}\right)^2}}\ydm 
=\pdp+ \pdm + \frac{2}{\sqrt{1-\ym^2}} \ydm+ R_1,
\end{align*}
where
\[\left|R_1\right| \leq \bo\left|\frac{2\ydm\ym}{\sqrt{((1-\bo)^2 - \ym^2)^3}}\right|.\]

Using that $\sqrt{(1-\bo)^2- \ym^2}=(1-\bo) \cos \phi$, we obtain 
\[\left|R_1\right| \lesssim \e\left|\frac{2\de}{((1-\bo)\sqrt \de)^3}\right|.\]

 Combining the results, we have
\begin{align*}
\left|\pdp+ \pdm + \frac{2}{\sqrt{1-\ym^2}} \ydm \right|&=\left|\pdp+\left(\pdm + \frac{2}{\cos \phi}\ydm \right) \right| + \left|R_1 \right|\\
&\lesssim\e \left(\left|\frac{1+\frac{2\de}{\sqrt\de}}{\de}\right|+ \left|\frac{2\de} {((1-\bo))^3 \de^{3/2}}\right|\right) = O\left(\frac{\e}{\de}\right).
\end{align*}

This completes the proof for (\ref{eq:phi}). \\

Let $t$ be the time between the two consecutive collisions of $\mo$. Then $\yp = \ym - \ydm t$. The angular distance that $\mo$ traveled is given by
\begin{align*}
\psi &= 2\pi - \arccos\left(\frac{\ym}{1-\bo}\right) - \arccos(\yp) 
= 2\pi  - \arccos\left(\frac{\ym}{1-\bo}\right) - \arccos\left(\frac{\ym - \ydm t}{1-\bo}\right)\\
&= 2\pi  - 2\arccos\left(\frac{\ym}{1-\bo}\right)  + R_2 =2\pi - 2\arccos \ym + R_3+ R_2,
\end{align*}
where $R_2$ and $R_3$ are the error estimates for the Taylor series expansion and are given explicitly by 
\begin{align*}
|R_2| &\leq \left|\frac{\ydm t}{\sqrt{(1-\bo)^2 - \ym^2} }\right|\\
|R_3| & \leq \left| \frac{\bo\ym}{(1-\bo)\sqrt{(1-\bo)^2 - \ym^2}} \right|.
\end{align*}
Therefore, we have
\begin{align*}
\ds \yp &= \ym - \ydm t =\ym - \ydm\left( \frac{\psi}{\pdm}\right)\\
&= \ym - \frac{\ydm}{\pdm} \left(2\pi - 2\arccos \ym + R_2 + R_3 \right)\\
&= \ym - \frac{2 \pi  - 2 \arccos \ym}{\pdm} \ydm + \frac{\ydm}{\pdm}(R_2 + R_3).
\end{align*}

Since $\mo$ can travel at most $2\pi$ between two collisions, $t$ is bounded by $\ds |t| < \frac{2 \pi}{\pd}$. Also note that $R_2$ and $R_3$ contain the factor $\ydm$ and $\bo$ respectively. We finish the proof for (\ref{eq:dist}) by computing, 
\begin{align*} 
\ds \Big|\yp  -  \ym &+ \frac{2 \pi  - 2 \arccos \ds \ym }{\pdm}\ydm \Big| 
\lesssim \left|\frac{\ydm}{\pdm}  (R_2 + R_3)\right|\\
& \lesssim \ydm^2\left|\frac{2\pi \ym}{\pdm \sqrt{(1-\bo)^2 - \ym^2}} \right|  + \bo \left|\frac{2\pi \ym}{\pdm(1-\bo)\sqrt{(1-\bo)^2 - \ym^2}} \right| \\
& \lesssim \de^2\left|\frac{2\pi}{\sqrt \de} \right|  + \e \left|\frac{2\pi}{((1-\bo))^2\sqrt\de} \right| = O\left(\de^{3/2}\right) + O\left(\frac{\e}{\sqrt\de}\right).
\end{align*}

\end{proof}

\begin{corollary}
Under the same assumptions as in Lemma \ref{lemma_bounce} with the exception 
 $\left|\phi - \frac{3\pi}{2} \right|\gtrsim {\delta^k}$ for $0\leq k \leq 1$ and $\e \ll \de^{2k}$, the variables after the collision are given by the similar equations to (\ref{eq:phi}) and (\ref{eq:dist}) but with different error terms. 

\begin{align}\tag{5a}\label{eq:phi'}
\pdp=-\pdm - \frac{2}{\sqrt{1-\ym^2}}\ydm + O\left(\frac{\e}{\de^{2k}}\right) 
\end{align}
\begin{align}\tag{6a}\label{eq:dist'}
\yp=\ym - \frac{2 \pi  - 2 \arccos \ds  \ym }{\pdm} \ydm + O\left(\frac{\de^2}{\de^k}\right) + O\left(\frac{\e}{\de^{k}}\right).
\end{align}

\end{corollary}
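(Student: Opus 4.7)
The plan is to re-run the proof of Lemma~\ref{lemma_bounce} essentially verbatim, replacing the lower bound $\cos\phi \gtrsim \sqrt{\de}$ (which came from $|\phi - \tfrac{3\pi}{2}| \gtrsim \sqrt{\de}$) by $\cos\phi \gtrsim \de^k$. The strengthened smallness hypothesis $\e \ll \de^{2k}$ plays exactly the role that $\e \ll \de$ played in the lemma: it ensures the leading $O(\e/\de^{2k})$ term is genuinely small.

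For (\ref{eq:phi'}), I would begin from the identity
\[
\pdp + \Big(\pdm + \tfrac{2}{\cos\phi}\ydm\Big) \;=\; \bo\cdot\frac{\pdm + 1 + 2\ydm\sin^2\phi/\cos\phi}{\bo\sin^2\phi + \cos^2\phi}
\]
obtained in the lemma. The denominator is now $\gtrsim \de^{2k}$, while the numerator is $O(1 + \de^{1-k}) = O(1)$ since $1-k \ge 0$, giving $O(\e/\de^{2k})$ overall. Replacing $1/\cos\phi$ by $1/\sqrt{1-\ym^2}$ then contributes the remainder $R_1 \lesssim \e\de/\de^{3k} = \e/\de^{3k-1}$, which is absorbed into $O(\e/\de^{2k})$ because $3k - 1 \le 2k$ for all $k \le 1$.

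For (\ref{eq:dist'}), the derivation of $\yp = \ym - \ydm t$ carries over unchanged; only the Taylor remainders $R_2, R_3$ are rebounded using the new floor $\sqrt{(1-\bo)^2 - \ym^2} = (1-\bo)\cos\phi \gtrsim \de^k$. After multiplying by $|\ydm/\pdm| \lesssim \de$, this yields $O(\de^2/\de^k)$ from $R_2$ and $O(\e\,\de^{1-k}) = O(\e/\de^k)$ from $R_3$ (the last step using $\de < 1$), matching the stated error bound.

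The entire argument is pure exponent bookkeeping; there is no conceptual obstacle. The only points that deserve an explicit check are (i) that $3k-1 \le 2k$ holds throughout $k \in [0,1]$ so the $R_1$-type contribution is dominated by the leading $\e/\de^{2k}$, and (ii) that the implicit constants in the lemma's estimates were uniform in $\bo$, $\ym$, $\pdm$, so nothing degrades when $k$ is varied. Both are immediate from the form of the bounds already written down.
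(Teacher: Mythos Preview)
Your proposal is correct and matches the paper's approach exactly: the paper's entire proof is the single sentence ``When computing the error terms, use $\cos\phi \gtrsim \de^k$,'' which is precisely the substitution you describe and carry out in detail. Your exponent checks (in particular $3k-1 \le 2k$ for $k\le 1$, and $\e\,\de^{1-k} \le \e/\de^{k}$) are the right verifications that the recycled estimates land in the stated error classes.
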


\begin{proof}
When computing the error terms, use $\cos \phi  \gtrsim \de^k$. 
\end{proof}


Now, we can state the adiabatic invariance theorem for the special case when the light mass hits the floor and the dumbbell is far away from the vertical position: $\phi = 3\pi/2$.
\begin{theorem}
\label{ai_theo}
Suppose right  before the collision $\pdm \neq 0$ and  $\phi-\frac{3\pi}{2} \neq 0$.  Then there is $\de >0$  such that if 
$0< \e =\de^2$,  $ -\de < \dot y_0 < 0$,
then there exists an adiabatic invariant of the dumbbell system, given by $I = |\pd|f(y)$, where $f(y) =  \pi -\arccos y $. In other words, $|\dot{\phi}_{n}| f(y_{n}) - |\dot{\phi}_{0}| f(y_{0})= O(\de)$ after $N= O(\de^{-1})$ collisions.
\end{theorem}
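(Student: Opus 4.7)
The plan is to iterate the collision map from the Corollary immediately following Lemma~\ref{lemma_bounce}, specialized to parameter $k=0$, over the $N = O(\delta^{-1})$ bounces, and to show that the per-bounce change of $I_n := |\dot\phi_n|f(y_n)$ is $O(\delta^2)$, so that the telescoped sum is $O(\delta)$. A preliminary step is to check that the regime-defining bounds of the Corollary with $k=0$, namely $|\phi - 3\pi/2|\gtrsim 1$ and $\epsilon\ll 1$, persist along the orbit. The initial gap $|\phi_0-3\pi/2|\geq c > 0$ is given by hypothesis; together with the collision identity $y_n = (1-\epsilon)\cos(\phi_n - 3\pi/2)$ and the fact that $\dot y_n = O(\delta)$ drives $y_n$ away from $1-\epsilon$, this gap can be maintained at least down to $c/2$ throughout the $N$ bounces. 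A direct check of (\ref{eq:law}) in this far-from-singular regime also gives $\dot y_{n+1} = \dot y_n + O(\epsilon) = \dot y_n + O(\delta^2)$ per bounce, so $|\dot y_n| = O(\delta)$ uniformly in $n \leq N$.

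With these bounds the Corollary supplies, writing $s_n = \mathrm{sign}(\dot\phi_n)$ (which flips at each bounce since $|\dot\phi_n|\gg|\dot y_n|$),
\[
|\dot\phi_{n+1}|-|\dot\phi_n| = \frac{2 s_n \dot y_n}{\sqrt{1-y_n^2}} + O(\delta^2),\qquad y_{n+1}-y_n = -\frac{2 s_n f(y_n) \dot y_n}{|\dot\phi_n|} + O(\delta^2).
\]
Expanding $f(y_{n+1}) = f(y_n) + f'(y_n)(y_{n+1}-y_n) + O((\Delta y)^2)$ with $f'(y) = 1/\sqrt{1-y^2}$, the two leading-order contributions to $\Delta I_n$ exactly cancel:
\[
|\dot\phi_n|\,f'(y_n)\,(y_{n+1}-y_n) + f(y_n)\bigl(|\dot\phi_{n+1}|-|\dot\phi_n|\bigr) = -\frac{2 s_n f(y_n)\dot y_n}{\sqrt{1-y_n^2}} + \frac{2 s_n f(y_n)\dot y_n}{\sqrt{1-y_n^2}} = 0.
\]
What remains -- the second-order Taylor remainder, the product of two first-order increments, and the error terms inherited from the Corollary -- is $O(\delta^2)$ in each piece. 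Hence $\Delta I_n = O(\delta^2)$ uniformly in $n$, and telescoping yields $|I_N - I_0| \leq \sum_{n=0}^{N-1}|\Delta I_n|\leq N \cdot O(\delta^2) = O(\delta)$.

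The principal obstacle is the self-consistency argument of the first paragraph: because the Corollary is a one-bounce statement, iterating it $N$ times in succession requires the regime bounds $|\phi_n - 3\pi/2|\gtrsim 1$ and $|\dot y_n|\lesssim \delta$ to persist along the orbit. I plan to handle this by a standard bootstrap -- assume the bounds hold through step $n$ with fixed margins, apply the one-step analysis once to show the margins degrade by at most $O(\delta^2)$ per step, and accumulate to at most $O(\delta)$ over all $N$ steps, which is negligible for small $\delta$. The factor $1/|\dot\phi_n|$ in the $\Delta y$ recurrence could in principle amplify errors if $|\dot\phi_n|$ drifted toward $0$, but the same cancellation that yields $\Delta I_n = O(\delta^2)$ confines $|\dot\phi_n|$ to an $O(\delta)$-neighborhood of $|\dot\phi_0|$, which is bounded away from zero by hypothesis, closing the loop.
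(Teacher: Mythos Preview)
Your proposal is correct and follows essentially the same route as the paper's own proof: specialize the Corollary to $k=0$ (so that the error terms in (\ref{eq:phi'}) and (\ref{eq:dist'}) become $O(\epsilon)=O(\delta^2)$ and $O(\delta^2)$ respectively), expand $I_{n+1}-I_n$ to first order, observe that the leading contributions cancel because $f'(y)=1/\sqrt{1-y^2}$ and $2f(y)=2\pi-2\arccos y$, and then telescope over $N=O(\delta^{-1})$ bounces. The only cosmetic difference is that the paper \emph{derives} $f$ by writing the cancellation condition as the separable ODE
\[
-(2\pi-2\arccos y)\,f'(y)+\frac{2}{\sqrt{1-y^2}}\,f(y)=0
\]
and solving it, whereas you take $f(y)=\pi-\arccos y$ as given and verify the cancellation directly. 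Your explicit bootstrap paragraph---maintaining $|\dot y_n|\lesssim\delta$ and $|\phi_n-3\pi/2|\gtrsim 1$ along the orbit so that the one-step Corollary can be iterated---and your careful handling of the sign $s_n=\mathrm{sign}(\dot\phi_n)$ are both useful additions that the paper's proof leaves implicit.
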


\begin{proof}

We prove this by finding $f(y)$ that satisfies 
\[
|\pdp| f(\yp)  - |\pdm| f(\ym)= O(\de^2).
\] 
When $\e = \de^2$ and $\de$ is sufficiently small, it follows from (\ref{eq:dist'}) that,
\[
f(\yp)=f(\ym) - \left(\frac{2 \pi  - 2 \arccos \ym }{\pdm} \ydm + O(\de^2) \right)f'(\ym). 
\]

Then, we have
\begin{align*}
|\pdp| f(\yp)&= \left| \pdm + \frac{2\ydm}{\ds \sqrt{1-\ym^2}}  +O(\de^2) \right| \left( f(\ym) -\left( \frac{2 \pi  - 2 \arccos \ym }{\pdm} \ydm + O(\de^2)\right) f'(\ym) \right)\\
&=|\pdm| f(\ym) - \left(2 \pi  - 2 \arccos\ym\right)\ydm f'(\ym) + \left|\frac{2}{\ds \sqrt{1-\ym^2}} \ydm\right| f(\ym)+ O(\de^2).
\end{align*}

Therefore, $f(y)$ satisfies $|\pdp| f(\yp) - |\pdm| f(\ym) = O(\de^2)$ provided
\[
- (2 \pi  - 2 \arccos\ym ) \ydm f'(\ym) + \frac{2}{\ds \sqrt{1-\ym^2}} \ydm f(\ym)= 0.
\]
The solution of the above equation is given by
\[
f(\ym ) = \pi - \arccos \ym.
\]

Let $\ds N = O(\de^{-1})$ and let $\dot{\phi}_{N}$ and $y_{N}$ be the angular velocity and the distance after $N^{th}$ collision. Then, we have
\begin{align*}
|\dot{\phi}_{N}| f(y_{N}) - |\dot{\phi}_{0}| f(y_{0}) &= \sum_{k=1}^{N} \left(|\dot{\phi}_{k}| f(y_{k}) - |\dot{\phi}_{k-1}| f(y_{k-1}) \right)
 \lesssim n \cdot \de^2  \lesssim \de.
\end{align*}
\end{proof}

\begin{rmk}
Adiabatic invariant has a natural geometric meaning: angular velocity times the distance traveled by the light mass 
between two consecutive collisions.
\end{rmk}

Now, we state the theorem for a realistic scenario when a rapidly rotating  dumbbell scatters
off the floor. 
\begin{theorem}
Let the dumbbell approach the floor from infinity with \mbox{$\dot \phi_- \neq 0$}. 
There exists $\de > 0$ such that  if  $0<\e =\de^2 $, $-\de < \dot y_- <0$,  $|\phi_0-\frac{3\pi}{2}|  \sim \sqrt{\delta}$
then, after $N = O(\delta^{-1})$ bounces the dumbbell will leave the floor after the final bounce by $\mo$ with $I_N =  I_0 + O(\sqrt\de)$. The adiabatic invariant is defined as 
above $I = |\dot \phi | f(y)$.
\end{theorem}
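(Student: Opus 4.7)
The plan is to iterate the single-bounce collision map through the $N = O(\delta^{-1})$ successive collisions of $\mo$ and track the accumulated change of $I = |\pd|f(y)$. The analysis differs from Theorem \ref{ai_theo} in one essential way: now the first (and, by time-reversal symmetry, the last) bounces occur near $\phi = 3\pi/2$ where $\cos\phi \sim \sqrt\delta$, so the Corollary's estimates (\ref{eq:phi'})--(\ref{eq:dist'}) with $k=k_j$ varying from bounce to bounce must be used instead of the sharper bounds of Lemma \ref{lemma_bounce}.

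First I would track the evolution of $\ym$: by (\ref{eq:dist}), $\ym$ decreases by $\tfrac{2\pi - 2\arccos\ym_j}{\pdm_j}\,\ydm_j \sim \delta$ at each collision, so during the approach phase $1-\ym_j \sim j\delta$ and hence $\cos\phi_j \sim \sqrt{j\delta}$; an analogous bookkeeping holds in the departure phase. Second, I would verify escape by extracting from (\ref{eq:law}) the drift
\begin{equation*}
\ydp_j - \ydm_j \;=\; \frac{-2\e}{\e + \cos^2\phi_j}\bigl(\ydm_j + \pdm_j \cos\phi_j\bigr) \;\sim\; \frac{2\e|\pdm_j|}{|\cos\phi_j|} \;=\; O\!\left(\frac{\delta^{3/2}}{\sqrt j}\right),
\end{equation*}
where the key observation is that $\pdm_j\cos\phi_j$ keeps a consistent sign despite the alternation of $\pdm$ and of the side of $3\pi/2$ on which $\phi_j$ lies. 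Summing gives $\sum_j O(\delta^{3/2}/\sqrt j) = O(\delta)$, exactly the magnitude needed to flip the initial $\ydm \sim -\delta$ and permit a final exit past $y = 1-\e$.

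For the per-bounce change in the candidate invariant I would apply the Corollary with $k_j$ chosen so that $\delta^{k_j} \sim \cos\phi_j \sim \sqrt{j\delta}$. Repeating the cancellation of signal terms from the proof of Theorem \ref{ai_theo}, three error contributions remain: (A) the error in $|\pdp|$ multiplied by $f(\ym)$, of size $O(\delta/j)$; (B) the error in $\yp$ multiplied by $|\pdp|f'(\ym)$, also $O(\delta/j)$ once one uses $f'(\ym) \sim 1/\sqrt{j\delta}$; and (C) the second-order Taylor remainder $\tfrac12|\pdp|f''(\xi)(\yp-\ym)^2$, which with $f''(\xi) \sim (j\delta)^{-3/2}$ and $\yp - \ym \sim \delta$ contributes $O(\delta^{1/2}/j^{3/2})$. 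Summing yields
\begin{equation*}
|I_N - I_0| \;\lesssim\; \sum_{j=1}^{N}\left(\frac{\delta}{j} + \frac{\delta^{1/2}}{j^{3/2}}\right) \;=\; O(\delta \log(1/\delta)) + O(\sqrt\delta) \;=\; O(\sqrt\delta).
\end{equation*}

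The main difficulty is the careful accounting near the initial and final bounces, where the Corollary's errors are worst and $f''$ is most singular; convergence of $\sum_j j^{-3/2}$ is exactly what tames the Taylor remainder and delivers the claimed $O(\sqrt\delta)$ bound. Subsidiary subtleties are handling the transition where $\ydm$ passes through zero (so the leading term in (\ref{eq:dist}) vanishes and one must verify the iteration remains stable) and confirming that the departure phase is a mirror image of the approach phase, so that both can be absorbed into the same error budget.
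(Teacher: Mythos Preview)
Your approach is sound but genuinely different from the paper's. The paper does not track $\cos\phi_j\sim\sqrt{j\delta}$ bounce by bounce; instead it uses a coarse two-block split. For the first $n=\lfloor\mu/\sqrt\delta\rfloor$ bounces it invokes energy conservation to show $|\dot y_n-\dot y_0|\le\delta/2$, whence $y_n\lesssim1-\sqrt\delta$ and so $|\phi_n-\tfrac{3\pi}{2}|\gtrsim\delta^{1/4}$; on this block the per-bounce error in $I$ is the crude $O(\delta)$, contributing $n\cdot O(\delta)=O(\sqrt\delta)$. On the remaining $O(\delta^{-1})$ bounces the Corollary with $k=1/4$ gives per-bounce error $O(\delta^{3/2})$, again contributing $O(\sqrt\delta)$. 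Your harmonic-type sum $\sum_j\delta/j+\sum_j\delta^{1/2}j^{-3/2}$ is a refinement of this two-phase estimate and arrives at the same bound; it gives finer bounce-by-bounce information at the cost of more bookkeeping.

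Two places where the paper's argument is more complete than your plan. First, the paper does not establish $N=O(\delta^{-1})$ from the drift of $\dot y$; it simply invokes the uniform collision bound of Section~4 (the wedge estimate $N\le\lceil\pi/\gamma\rceil+1$ with $\gamma=\pi-2\arctan\sqrt{\beta_2/\beta_1}\sim2\sqrt\epsilon=2\delta$). Your drift calculation shows the cumulative change in $\dot y$ is $O(\delta)$, but to conclude escape in $O(\delta^{-1})$ bounces you need a matching \emph{lower} bound on the total drift, and your ``consistent sign'' remark is the right ingredient (indeed $\dot y_-+\beta_2\dot\phi_-\cos\phi=\dot y_{1,-}\le0$ is the impact velocity of $m_1$), but turning this into a quantitative lower bound is extra work the paper sidesteps. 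Second, the paper explicitly allows the heavy mass $m_2$ to strike the floor once and shows, directly from \eqref{eq:law}, that this contributes only $O(\delta)$ to the change in $I$; your plan does not address this contingency.
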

\begin{rmk}
The condition on the angle $|\phi_0-\frac{3\pi}{2}|\sim \sqrt{\delta}$ comes naturally from the following argument.  If $\yd =- \de$, the dumbbell approaching from infinity will naturally hit the floor when $y \gtrsim 1-\bo - \de$. Since $\de$ is small, this implies $|\phi_0-\frac{3\pi}{2}|\lesssim \sqrt{\de}$. If $\phi_0$ happens to be too close to $3\pi/2$, then there is no hope to
obtain adiabatic invariant and we exclude such set of initial conditions. In the limit $\delta\rightarrow 0$ the relative measure of the set where $|\phi_0-\frac{3\pi}{2}| = o(\sqrt{\de})$ tends to zero.
\end{rmk}

\begin{proof}
We will split the iterations (bounces) into two parts: before the $n^{th}$ iteration and after it, 
where $n = [\mu/\sqrt{\de}]$ and $\mu$ is sufficiently small (to be defined later). We claim that after $n$ bounces, $|\phi_{n} - \frac{3\pi}{2}| \gtrsim \sqrt[4]\de$. To prove this claim, we use energy conservation of the dumbbell system (\ref{eq:egy}), and (\ref{eq:phi'}). We have
\begin{align}\label{ydp}\tag{7}
\e(1-\e)\left(\pdm + \frac{2}{\sqrt{1-\ym^2}}\ydm + O\left(\frac{\e}{\de}\right)\right) ^2 + \ydp^2 = \e(1-\e) \pdm^2 + \ydm^2.
\end{align}
Next, 
\begin{align*}
|\ydp^2 - \ydm^2 | =\left| \e(1-\e)\pdm^2 - \e(1-\e)\left(\pdm + \frac{2}{\sqrt{1-\ym^2}}\ydm + O\left(\frac{\e}{\de}\right)\right) ^2 \right|\\
\leq\left|\e  \left(  \frac{4\ydm^2}{1-\ym^2}+  \frac{4\pdm \ydm}{\sqrt{1-\ym^2}}+ 2\pdm O\left(\frac{\e}{\de}\right) + \frac{4\ydm}{\sqrt{1-\ym^2}}O\left(\frac{\e}{\de}\right)+O\left(\frac{\e}{\de}\right)^2  \right) \right|
\end{align*}

By our assumptions, $1-\ym \gtrsim \delta$ so it follows that 
\begin{align*}
|\ydp^2 - \ydm^2|  \lesssim \de^2 \left(\frac{\de^2}{\de} + \frac{\de}{\sqrt \de}+ \de +  \frac{\de^2}{\sqrt \de} + \de^2 \right) \lesssim \de^{5/2},
\end{align*}
which implies 
\[
|\ydp - \ydm| \lesssim \delta^{3/2}.
\]


After $n=\lfloor \mu/\sqrt\de \rfloor$ bounces, $|\yd_n - \yd_0 | \leq \de/2$  if $\mu$ is sufficiently small and we still have the vertical velocity of same order, {\em i.e.} $\yd_n \sim \yd_0 \sim \de$. Then, at  the 
$n^{th}$ collision, the center of mass will be located at $y_n \lesssim 1 - \sqrt \de$, 
which will imply  $|\phi_{n} - \frac{3\pi}{2}| \gtrsim \sqrt[4]\de$. Now using Lemma 3.1, Corollary 3.2, and Theorem 3.3, we compute the error term of the adiabatic invariant under the assumption that the total number of collisions is bounded by 
$N\lesssim \delta^{-1}$ and the heavy mass does not hit the floor. 

\begin{align*}
|\dot{\phi}_{N}| f(y_N) &- |\dot{\phi}_{0}| f(y_{0}) \\
&=\sum_{k=1}^{n} \left(|\dot{\phi}_{k}| f(y_{k}) - |\dot{\phi}_{k-1}| f(y_{k-1}) \right)+ \sum_{n}^{N} \left(|\dot{\phi}_{k}| f(y_{k}) - |\dot{\phi}_{k-1}| f(y_{k-1}) \right)\\
&= \frac{\mu}{\sqrt\de}  O\left(\de)\right) + \left(\frac{C}{\de} \right) O\left(\de^{3/2}\right) =O(\sqrt\de).
\end{align*}

By the theorem proved in the next section there is indeed a uniform bound on the number of bounces.

If the heavy mass does hit the floor it can do so only once as shown in the next section. 
We claim that the corresponding change in the adiabatic invariant will be only of order $\delta$.
Indeed, using formula \eqref{eq:law} and the comment after that, we obtain 
\begin{align*}
\dot y_{+} &= -\dot y_{-} + O(\epsilon) \\
\dot \phi_{+} &= \dot \phi_{-} + O(\epsilon) + O(\delta),
\end{align*}
where subscripts $\pm$ denote the variables just after and before the larger mass hits the floor. 

Let the pairs $(y_m,\dot \phi_m)$ $(y_{m+1},\dot \phi_{m+1})$ denote  the corresponding values of 
$(y, \dot \phi)$ when the light mass hits the floor right before and after the large mass hits the floor. 
Then, since $\dot y = O(\delta)$, we find that  $y_{m+1}-y_m = O(\delta)$ and 
$\dot \phi_{m+1}-\dot \phi_m = O(\delta)$. As a consequence, 
\[
|\dot \phi_{m+1}| f(y_{m+1}) - |\dot \phi_{m}| f(y_{m}) = O(\delta)
\]
and the change in adiabatic invariant due to large mass hitting the floor is sufficiently small  $\Delta I = O(\delta)$.

\end{proof}


\section{Estimate of maximal number of collisions}

In this section, we estimate the maximal number of collisions of the dumbbell with the floor as a function of the mass ratios. As we have seen in section 2.2, on $(Y-\phi)$ plane, the dumbbell reduces to a mass point that has unit velocity and elastic reflection. We use the classical billiard result which states that the number of collisions inside a straight wedge with the inner angle $\gamma$ is given by $N_{\g}=\lceil \pi/\g\rceil$, see e.g. \cite{tabachnikov}. 

\subsection{Boundaries on $Y-\phi$ plane}
First, we discuss the properties of the boundaries of the dumbbell system on $Y-\phi$ plane. \\

When $m_1=m_2$, we have the mass ratios $\bo = \bt = 1/2$. Recall from (\ref{bd}) that the boundaries are given by 
\begin{align*}
Y={\rm max} \left\{-\sqrt{\bt/\bo}\sin\phi, \sqrt{\bo/\bt}\sin\phi \right\}= |\sin \phi | \text{ for } \phi \in [0, 2\pi] 
\end{align*}
Note that the angle between the two sine waves is $\gamma = \pi/2$.\\

When  $\mo \ne \mt$, it follows from (\ref{bd}) that the boundaries consist of two sine curves with different heights. We will assume  $\mo < \mt$,  since the case $\mt < \mo$ is symmetric. It is easy to see that generically in the limit $\mo/\mt \rightarrow 0$ most of repeated collisions will occur between two peaks of (\ref{bd1}).  In Section 4.2, we will find the upper bound for the number of collisions of the mass point to the boundaries. To start the proof, let us consider the straight wedge formed by the tangent lines to (\ref{bd1}) at $\phi=0$ and $\pi$. We call these tangent lines $\ell_0$ and $\ell_\pi$ respectively, and denote the angle of the straight wedge by $\g$, see Figure 3. Let us denote the wedge created by the union of the sine waves when $Y>0$ and the tangent lines $\ell_0$ and $\ell_\pi$ when $Y\le 0$ as the hybrid wedge.

\begin{figure}[h]
 \includegraphics[width=2.5in]{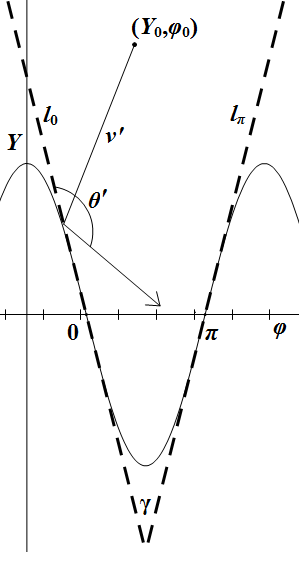}
\caption{Construction of the straight wedge and the hybrid wedge on $Y-\phi$ plane.}
\end{figure}

\subsection{The upper bound for the number of collisions}
We first introduce some notations. Denote the trajectory bouncing from the hybrid wedge by $v'$, and let the approximating trajectory bouncing from the straight wedge   by the double-prime symbols $v''$. When $v'$ or $v''$ is written with the subscript $i$, it denotes the segment of the corresponding trajectory between the $i$-th bounce and the $i+1$-st bounce. Let $\theta_i'$ be the angle from the straight wedge to $v_i'$, and $\theta_i''$ denote the angle from the straight wedge to $v_i''$ after the $i$-th collision. Define $\rho_i$ as the angle difference between the straight wedge and the curved wedge at $i$-th collision of $v_i'$. 
The trajectory will terminate when the sequence of angles terminates (due to the absence of the next bounce), or when there will be no more intersections with the straight wedge. This will happen when the angle of intersection, $\theta$, $\theta'$ and $\theta''$, with the tangent line is less than or equal to $\g$.

\begin{figure}[h]
 \includegraphics[width=5in]{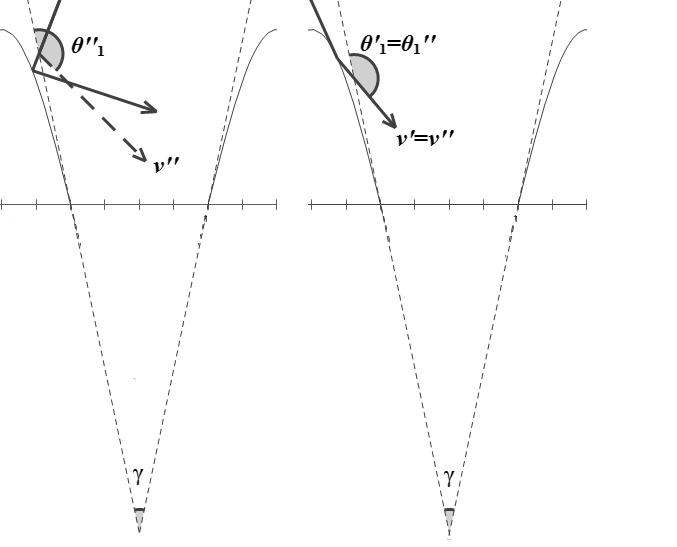}
 \caption{Two different base cases for Lemma 4.1.}
\end{figure}
\begin{lemma}
Consider the hybrid wedge and the straight wedge described above. The sequence of angles $\theta_i'', 1\le i$ will terminate after or at the same index as the sequence of angles $\theta_i', 1\le i$.
\end{lemma}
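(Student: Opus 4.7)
The plan is to establish by induction on the collision index $i$ the inequality $\theta_i' \le \theta_i''$ for every $i$ at which both sequences are still defined, and then extract the lemma from the common termination criterion $\theta \le \gamma$ stated just before the lemma.

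I would couple the two trajectories by letting $v'$ and $v''$ enter with identical initial position and direction. As long as the collisions occur strictly below the tangent points where the curve meets $\ell_0$ and $\ell_\pi$, the two trajectories coincide and $\theta_i' = \theta_i''$; this accounts for the ``below the tangent points'' configuration which is one of the base cases in Figure~4. The induction becomes non-trivial precisely at the first index for which $v'$ strikes the sine curve while $v''$, seeing only the extended tangent line, strikes that line above the tangent point---the other base case in Figure~4.

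For the inductive step, I would split according to whether the $i$-th collision of $v'$ hits a tangent line or the sine curve. In the first case $\rho_i = 0$, so the reflection of $v'$ is identical to a reflection off the straight wedge wall, and the standard wedge computation gives $\theta_{i+1}' = \theta_i' - \gamma$ and $\theta_{i+1}'' = \theta_i'' - \gamma$, so the inductive inequality is preserved. In the second case the tangent to the curve at the collision point makes a nonzero angle $\rho_i$ with the adjacent straight wedge wall. Because the sine curve $Y = -\sqrt{\beta_2/\beta_1}\sin\phi$ is concave in the relevant range (it lies below its tangent lines), this tangent is rotated toward the opposite wall of the wedge, so $\rho_i > 0$. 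Decomposing the reflection across the curve's tangent as the reflection across the straight wedge wall followed by a rotation through $2\rho_i$, I obtain
\[
\theta_{i+1}' \le \theta_i' - \gamma - 2\rho_i \le \theta_i'' - \gamma = \theta_{i+1}'',
\]
closing the inductive step in this case as well.

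Once $\theta_i' \le \theta_i''$ is established for every admissible index, the lemma follows: the smallest $i$ with $\theta_i' \le \gamma$ is at most the smallest $i$ with $\theta_i'' \le \gamma$, so the hybrid-wedge sequence terminates no later than the straight-wedge one. The main obstacle I anticipate is pinning down the sign of the correction $2\rho_i$ in the curve case. Concavity gives the correct sign, but choosing a consistent convention for ``the angle from the straight wedge to $v_i'$''---in particular which of the two tangent lines serves as the reference and how that convention interacts with the alternation of walls---requires the geometric case analysis encoded in the two configurations in Figure~4. With the convention fixed, the rest is the routine induction sketched above.
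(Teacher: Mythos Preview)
Your proposal is correct and follows essentially the same argument as the paper: establish $\theta_i'\le\theta_i''$ by induction using the recursion $\theta_{i+1}'=\theta_i'-\gamma-2\rho_{i+1}$ versus $\theta_{i+1}''=\theta_i''-\gamma$, with $\rho\ge 0$ coming from concavity of the sine arc, and then read off termination from the common criterion $\theta\le\gamma$. The paper's two base cases in Figure~4 are exactly your distinction between a first hit on the tangent-line part (where $\theta_1'=\theta_1''$) and a first hit on the sine arc (where $\theta_1'=\pi-\gamma-2\rho_1<\pi-\gamma=\theta_1''$); after that the inductive step is identical.
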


\begin{proof}
Suppose that the initial segment $v_0'$ (of the full trajectory)  crosses the straight wedge before it hits the  hybrid wedge, as shown on the right panel of Figure 4.1. Then
\begin{align*}
 \theta_1'=\pi-\g-2\rho_1'<\pi-\g=\theta_1''.
 \end{align*}
When the initial segment   $v_0'$ hits the hybrid wedge before crossing the straight wedge, as shown on the left panel,  then set $\theta_1'=\theta_1''$.
Now we can proceed by induction if $\theta_i'> \g $ and $\theta_i''>\g$ and the sequence $\theta_i'$ has not terminated. 
\begin{align*}
\theta_{i+1}'&=\theta_i'-\g-2\rho_{i+1}'\\
\theta_{i+1}''&=\theta_i''-\g
\end{align*}
which implies that $\theta_{i+1}'\le\theta_{i+1}''$.\\
Since $\theta_{i}'\le\theta_{i}''$, then $v'$ will terminate at the same time or before $v''$. 
\end{proof}

Define the bridge as the smaller sine wave created by $Y=-\sqrt{\bt/\bo}\sin\phi$ when $m_1 \ll m_2$ from $\phi=0$ to $\phi=\pi$. The union of the bridge  with the hybrid wedge will create the boundary as it is actually defined by the dumbbell dynamics.

\begin{lemma} The presence of the bridge in the hybrid wedge will increase the number of collisions of the dumbbell by at most one from the number of collisions of the dumbbell to the hybrid wedge. 

\end{lemma}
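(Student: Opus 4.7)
The plan is to couple the trajectory $v$ in the hybrid wedge enriched by the bridge with the trajectory $v'$ in the hybrid wedge alone, both starting from the same initial condition. The two trajectories coincide until $v$ first meets the bridge; if that never happens, the collision counts are equal and the lemma holds trivially. Otherwise, let the $(k+1)$-st collision of $v$ be this first bridge hit, so the preceding $k$ bounces are wedge-wall collisions common to $v$ and $v'$.

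I would then reduce the lemma to two claims: (a) the trajectory $v$ meets the bridge at most once, and (b) the number of wedge-wall bounces of $v$ occurring after the bridge hit is at most the number of wedge-wall bounces of $v'$ occurring after its $k$-th bounce. Granted these, $\#v \le k + 1 + (\#v' - k) = \#v' + 1$, which is the claimed bound.

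For (a), I would exploit the convexity of the bridge together with the geometry of the hybrid wedge. A reflection off the bridge (a short convex upward-pointing arc situated in the bottom of the wedge) imparts a positive vertical component to the outgoing velocity. Because the hybrid wedge opens upward to infinity and, by Lemma 4.1, any trajectory inside it undergoes at most $\lceil\pi/\g\rceil$ wall bounces before escaping, the post-bridge portion of $v$ escapes through the top in finitely many wall reflections without ever descending back to the bridge region. For (b), I would perform an angle-comparison argument in the spirit of Lemma 4.1: the bridge reflection makes the outgoing direction of $v$ steeper (closer to the wedge bisector) than the direction $v'$ carries at the corresponding instant, and steeper entry angles into a wedge produce fewer subsequent wall bounces by the monotonicity of the bounce count in the entry angle.

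The main obstacle lies in (b): both $v$ and $v'$ live in the hybrid wedge whose walls include the curved sine portions, and one must verify that the angle-monotonicity of the bounce count carries over from the straight-wedge setting. My expectation is that this can be handled by pairing each trajectory with its straight-wedge approximant as in Lemma 4.1 and tracking the angles $\tip$ and $\theta_i''$ jointly, using that the deviations $\rho_i$ between the hybrid boundary and its tangents only further reduce $\tip$ and thus strictly benefit the comparison. An additional subtlety is confirming that the post-bridge direction of $v$ is indeed steeper than that of $v'$ at the crucial instants, which will follow by explicit analysis of the reflection formula off the bridge in the small-$\epsilon$ regime considered throughout the paper.
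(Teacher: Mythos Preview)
Your decomposition into (a) ``at most one bridge hit'' and (b) ``no more wall bounces afterwards than $v'$'' is the right shape, and it is essentially how the paper organizes the argument. The gaps are in how you justify (a) and (b).

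For (a), your key claim --- that reflection off the bridge always imparts a positive $Y$-component to the outgoing velocity --- is false. Near the endpoints of the bridge the inward normal is tilted, and a ray arriving with a sufficiently oblique downward velocity reflects with its $Y$-component still negative (try, for instance, $c=\sqrt{\beta_1/\beta_2}=1/2$, a hit near $\phi=\pi/6$, and incoming direction proportional to $(-1,-1)$). Even if the outgoing $Y$-velocity were positive, a single subsequent wall bounce can make it negative again, so ``escapes without descending'' does not follow from Lemma~4.1, which only bounds the \emph{number} of wall bounces, not the monotonicity of $Y$. The paper avoids proving (a) outright: instead it makes two symmetry reductions --- reverse time if the bridge is struck with positive $Y$-velocity, and reflect about the midline so the hit lies on the left half --- and then runs the angle comparison directly from that single normalized bridge hit.

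For (b), ``the bridge reflection makes the outgoing direction steeper'' is not enough. What the paper actually does is compute the post-bridge angle explicitly,
\[
\theta_{i+1}=\pi-\theta_i-2\tau-2\rho_{i+1},\qquad \theta_{i+1}'=\theta_i'-\gamma-2\rho_{i+1}',
\]
and then uses the inequalities $\theta_i>\tfrac{\pi+\gamma}{2}$, $0<\tau<\tfrac{\gamma}{2}$, and $\rho_{i+1}'<\rho_{i+1}$ (all consequences of the symmetry reductions) to get $\theta_{i+1}\le\theta_{i+1}'$. The subsequent induction is not a pure angle comparison: because the hybrid walls are curved, one must simultaneously track the \emph{positions} of the hits (through the $\rho_j$'s) to keep the inequality $\theta_j\le\theta_j'$ propagating. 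Your sketch acknowledges this difficulty but defers it; the paper resolves it by showing that $\theta_{i+1}\le\theta_{i+1}'$ together with $\rho_{i+1}\ge\rho_{i+1}'$ forces $\rho_{i+2}\le\rho_{i+2}'$, which then feeds back into the angle recursion.

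Finally, your appeal to ``the small-$\epsilon$ regime'' to settle the steeper-direction claim is misplaced: Lemma~4.2 is stated for arbitrary $m_1<m_2$, not just for $\beta_1=\epsilon\ll 1$, so an argument relying on that asymptotic would not prove the lemma as stated.
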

\begin{proof}
Consider the true  trajectory (denoted by $v$) that ``sees'' the bridge. Recall the  definition  of angle $\theta_i'$, which is the angle from the straight wedge to $v'_i$. Similarly, we let $\theta_i$ be the angle to $v_i$. Before $v$ intersects the bridge, by Lemma 4.1 we have
\begin{align*}
v_i &=v_i'\\
\theta_i&=\theta_i'\\
\theta_i&=\theta_{i-1}-\gamma-2\rho_{i}.
\end{align*}
Now define $\tau$ to be the angle measured from the horizontal line to the tangent line at the point where $v$ hits the bridge. Note that $\tau$ takes a positive value if the dumbbell hits the left half of the bridge, and $\tau$ takes a negative value if $v$  hits the right half of the bridge. We express $\theta_{i+1}$ after the bounce from the bridge in terms of $\theta_i$. By this convention,  the bounce from the bridge does not increase the index count but we will have to add $+1$ in the end.\\

Then, we  have 
\begin{align*}\tag{8}\label{8}
\theta_{i+1}&=\pi-\theta_i-2\tau - 2\rho_{i+1}\\
\theta_{i+1}'&=\theta_i'-\gamma-2\rho_{i+1}'
\end{align*}

We may assume that $v$ hits the bridge with non-positive velocity in $Y$. If the dumbbell hits the bridge with positive velocity in $Y$, it will continue to move in the positive $Y$ direction after reflection from  the bridge. Then, we consider the reverse trajectory to bound the number of collisions. This allows us to restrict $\theta_i$. Moreover, $v_i$ naturally hits the upper part of hybrid wedge than $v'_i$. We also assume that $v$ hits the left half of the bridge. Otherwise, we can reflect the orbit around the vertical line passing through the middle point of the bridge. 

Utilizing the above arguments, we have the inequalities

\begin{align*}\tag{9} \label{9}
 \frac{\pi+\g}{2}&<\theta_i \\
0<\tau &< \frac{\g}{2}\\
\rho_{i+1}' &< \rho_{i+1}.
\end{align*}

It is straightforward to verify that (\ref{8}) and (\ref{9}) imply $\theta_{i+1} \leq \theta_{i+1}'$.  From the $i+2$-nd bounce, if $\theta_i$ has not terminated, we can apply induction argument similar to the proof in Lemma 4.1.  We have the base case 
\begin{align*}
\theta_{i+1} &\leq \theta_{i+1}'\\
\rho_{i+1} & \geq \rho_{i+1}'.
\end{align*}

Note that $\rho$'s indicate the relative position of a collision point in the hybrid wedge. That is, if $\rho_{i+1}  \geq \rho_{i+1}'$, then the starting point of $v_{i+1}$ is located at or above that of $v_i$. Since $\theta_{i+1} \leq \theta_{i+1}'$ and $v_{i+1}$ starts above $v_{i+1}'$, we know $v_{i+2}$ will start on the hybrid wedge higher than $v_{i+2}'$. This implies 
\[
\rho_{i+2} \leq  \rho_{i+2}'.
\]
Then using the recursive relationship, 
\begin{align*}
\theta_{i+2}=\theta_{i+1}-\gamma-2\rho_{i+2}\\
\theta_{i+2}'=\theta_{i+1}'-\gamma-2\rho_{i+2}',
\end{align*}
 we obtain $\theta_{i+2} \leq \theta_{i+2}'$. By induction $\theta_i \leq \theta_i '$ for all $i$. Taking into account the bounce on the bridge, we conclude that the number of bounces of $v$ will increase at most by one relative to that of $v'$. Note that in most cases, the number of bounces of $v$ will be less than the number of bounces of $v'$. 
\end{proof}

Now we are ready to prove the main theorem.
\begin{theorem}
The number of collisions of the dumbbell is bounded above by $\ds N_{\g}=\big \lceil \pi/\g\big\rceil+1$, where $\ds \g = \pi - 2 \arctan \sqrt{\bt /\bo}$. 
\end{theorem}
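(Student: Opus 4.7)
The plan is to combine Lemmas 4.1 and 4.2 with the classical billiard estimate for a straight wedge of opening angle $\g$, which bounds any trapped trajectory by $\lceil\pi/\g\rceil$ bounces via the unfolding argument recalled at the start of this section \cite{tabachnikov}. Write $N$, $N_{\mathrm{hyb}}$, $N_{\mathrm{str}}$ for the numbers of collisions against the true boundary \eqref{bd}, against the hybrid wedge, and within the straight wedge, respectively. Then Lemma 4.2 gives $N \le N_{\mathrm{hyb}}+1$ (the bridge contributes at most one additional bounce), Lemma 4.1 gives $N_{\mathrm{hyb}} \le N_{\mathrm{str}}$ (the straight-wedge trajectory terminates no earlier than the hybrid-wedge one), and the classical bound gives $N_{\mathrm{str}}\le\lceil\pi/\g\rceil$. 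Chaining these three inequalities yields $N\le\lceil\pi/\g\rceil+1$, which is the asserted bound.

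It remains to identify the wedge angle $\g$. By construction, the sides $\ell_0$ and $\ell_\pi$ of the straight wedge are the tangent lines to \eqref{bd1} at $\phi=0$ and $\phi=\pi$. Differentiating $Y=-\sqrt{\bt/\bo}\sin\phi$ at these two points produces slopes $-\sqrt{\bt/\bo}$ and $+\sqrt{\bt/\bo}$ respectively, so each side makes angle $\arctan\sqrt{\bt/\bo}$ with the horizontal. The symmetric V-shaped interior opening is therefore $\g = \pi - 2\arctan\sqrt{\bt/\bo}$, exactly as stated in the theorem.

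The main technical content has already been absorbed into Lemmas 4.1 and 4.2, so the theorem itself is essentially a short chain of inequalities together with a small trigonometric computation of $\g$. The only point that might warrant extra care in the assembly is to confirm that the classical wedge bound applies uniformly to trajectories which enter the straight wedge from outside rather than originating at its vertex. This is handled by the standard unfolding argument: reflecting across successive sides converts the orbit into a straight line in the unfolded plane, and once the cumulative unfolded angle exceeds $\pi$ no further intersection with the wedge sides is possible, giving the $\lceil\pi/\g\rceil$ estimate independently of the entry data.
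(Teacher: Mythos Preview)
Your proposal is correct and follows essentially the same approach as the paper: chain Lemma~4.2 ($N\le N_{\mathrm{hyb}}+1$), Lemma~4.1 ($N_{\mathrm{hyb}}\le N_{\mathrm{str}}$), and the classical wedge bound ($N_{\mathrm{str}}\le\lceil\pi/\g\rceil$), then read off $\g$ from the tangent slopes of \eqref{bd1} at $\phi=0,\pi$. The paper additionally treats the equal-mass case $\mo=\mt$ separately (where there is no bridge and $\g=\pi/2$ gives $\lceil\pi/\g\rceil=2<N_\g$), but this is subsumed by your bound anyway.
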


\begin{proof}
When $\mo =\mt$, as we have found in the previous section 4.1, the boundaries form identical hybrid wedges which intersect at $\pi/2$. Using Lemma 4.1, we conclude that the upper bound for the number of collisions is $\lceil\pi/\g\rceil= 2$, which is less than $N_{\g}=3$.\\
When $\mo < \mt$, we consider the true boundaries which consist of a hybrid wedge with the bridge. Using Lemma 4.1 and Lemma 4.2, we conclude that the the maximal number of collisions to the true boundary is bounded above by $N_\g=\lceil\pi/\g\rceil+1$. Since $\g = \pi-2\arctan\sqrt{\bt/\bo}$, this completes the proof. 
\end{proof}

\section*{Acknowledgment}
The authors acknowledge support from National Science Foundation 
grant DMS 08-38434 ”EMSW21-MCTP: Research Experience for Graduate Students." 
YMB and VZ were also partially supported by NSF grant  DMS-0807897. The authors 
would also like to thank Mark Levi for a helpful discussion.

\end{document}